\newcommand{\Aut}{\mathop{\mathrm{Aut}}}
\newtheorem{theorem}{Theorem}[section]
\newtheorem{lemma}[theorem]{Lemma}
\newtheorem{claim}[theorem]{Claim}
\newtheorem*{hauptlemma}{Hauptlemma}
\newtheorem{notation}[theorem]{Hypothesis}
\newtheorem*{TW}{Thompson-Wielandt theorem}
\newcommand{\SL}{\mathop{\mathrm{SL}}}
\newcommand{\soc}{\mathop{\mathrm{soc}}}
\newcommand{\Alt}{\mathop{\mathrm{Alt}}}
\title[Locally primitive graphs of affine type]{An application of the Local $C(G,T)$ Theorem to a conjecture of Weiss}
\author{Pablo Spiga}
\thanks{Keywords: arc-transitive graphs, primitive group, affine type, Weiss Conjecture} 
\begin{document}
\maketitle
\begin{abstract}
Let $\Gamma$ be a connected $G$-vertex-transitive graph, let $v$ be
a vertex of $\Gamma$ and let $G_v^{\Gamma(v)}$ be the permutation group induced by the action of the vertex-stabiliser $G_v$ on the neighbourhood $\Gamma(v)$. The graph $\Gamma$ is said to be $G$-\emph{locally
  primitive} if $G_v^{\Gamma(v)}$ is primitive.
 
Richard Weiss~\cite{Weisss} conjectured in $1978$ that, there exists a
function $f:\mathbb{N}\to \mathbb{N}$ such that, if $\Gamma$ is a connected
$G$-vertex-transitive locally primitive graph of valency $d$ and $v$ is a vertex of
$\Gamma$ with $|G_v|$ finite, then $|G_v|\leq f(d)$. As an application of the Local $C(G,T)$ Theorem~\cite{CGT}, we prove this
conjecture when $G_v^{\Gamma(v)}$ contains an abelian regular subgroup. In fact, we show that the point-wise stabiliser in $G$ of a ball of $\Gamma$ of radius $4$ is the identity subgroup.
\end{abstract}

\section{Introduction}
Let $\Gamma$ be an undirected graph and let $G$ be a subgroup of the automorphism group $\Aut(\Gamma)$ of $\Gamma$. For each vertex $v$ of $\Gamma$, we let $\Gamma(v)$ denote the set of vertices adjacent to $v$ in $\Gamma$ and $G_v^{\Gamma(v)}$ the permutation group induced on $\Gamma(v)$ by the vertex-stabiliser $G_v$ of $v$. We let $G_v^{[1]}$ denote the subgroup of $G_v$ fixing point-wise $\Gamma(v)$ and we let $G_{uv}^{[1]}=G_u^{[1]}\cap G_{v}^{[1]}$ denote the subgroup of the arc-stabiliser $G_{uv}$ fixing point-wise $\Gamma(u)$ and $\Gamma(v)$. Similarly, given $r\in\mathbb{N}$, we denote by $G_v^{[r]}$ the point-wise stabiliser in $G$ of the ball of $\Gamma$ of radius $r$ centred in $v$ and $G_{uv}^{[r]}=G_u^{[r]}\cap G_v^{[r]}$.

The graph $\Gamma$ is said to be $G$-\emph{vertex-transitive} if $G$ acts
transitively on the vertices of $\Gamma$. We say that the $G$-vertex-transitive graph
$\Gamma$ is \emph{locally primitive} if $G_v^{\Gamma(v)}$ is primitive. In $1978$ Richard
Weiss~\cite{Weiss} conjectured that for a finite connected $G$-vertex-transitive, locally
primitive graph $\Gamma$ and for a vertex $v$ of $\Gamma$, the size of $G_v$ is bounded above by some function depending only on the valency of $\Gamma$ (see also the introduction in~\cite{Weiss2}, where the hypothesis of $\Gamma$ being finite is replaced by the much weaker hypothesis that the order of $G_v$ is finite). The truth of the Weiss Conjecture is still open and
only partial results are known: for example, the case of $G_v^{\Gamma(v)}$ being $2$-transitive has been settled affirmatively with a long series of papers~\cite{T0,T1,T2,T3,T4,Weiss,Weiss0,Weiss1} by the work of Weiss and Trofimov.

A modern method for analysing a finite primitive permutation group is via the O'Nan-Scott theorem. In~\cite[Theorem]{LPS} five types of primitive groups are defined (depending on the group- and action-structure of the socle): HA (\emph{Affine}), AS (\emph{Almost Simple}), SD (\emph{Simple Diagonal}), PA (\emph{Product Action}) and TW (\emph{Twisted Wreath}), and it is shown that  every primitive group belongs to  one of these types. 

The only primitive groups $X$ where the socle $\soc (X)$ of $X$ is a regular normal subgroup are of
Affine type or of Twisted Wreath type. In the former case, $\soc(X)$ is an elementary
abelian $p$-group and, in the latter case, $\soc(X)$ is isomorphic to
the direct product $T^\ell$ with $\ell\geq 2$ and with $T$ a
non-abelian simple group. One indication supporting the Weiss Conjecture is~\cite{Spiga}, where we have shown that  if $\Gamma$ is a connected $G$-vertex-transitive graph of
valency $d$,  $G_v^{\Gamma(v)}$ is a primitive group of TW type and
$(u,v)$ is an arc of $\Gamma$, then $G_{uv}^{[1]}=1$. Another main indication supporting the Weiss Conjecture is in~\cite{Weiss}, where Weiss
shows that, if $\Gamma$, $d$, $u$ and $v$ are as above and $G_v^{\Gamma(v)}$ is a primitive group of HA type, then either $G_{uv}^{[1]}=1$ or the socle of $G_v^{\Gamma(v)}$ is an elementary
abelian $2$- or $3$-group (more information on the structure of $G_v^{\Gamma(v)}$ in
the latter case is given in~\cite[Theorem~$(i)$]{Weiss}). 

In this paper, as an application of the Local $C(G,T)$ Theorem, we completely settle the case of $G_v^{\Gamma(v)}$ containg an abelian regular subgroup, that is,  of HA type.

\begin{theorem}\label{thm}Let $\Gamma$ be a connected
  $G$-vertex-transitive graph and let $(u,v)$ be  an arc of
  $\Gamma$. Suppose that  $G_v^{\Gamma(v)}$ is a primitive group containg an abelian regular subgroup and let $|\Gamma(v)|=r^\ell$ for some prime $r$ and positive integer $\ell$. Then $G_{uv}^{[1]}=1$ when $r\geq 5$, $G_{uv}^{[2]}=1$ when $r=3$ and $G_{uv}^{[3]}=1$ when $r=2$.
In particular,  $G_{v}^{[4]}=1$. 
\end{theorem}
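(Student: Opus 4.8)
The plan is to analyse the amalgam formed by $G_u$, $G_v$ and $G_{uv}$ with the standard tools of the amalgam method, the Local $C(G,T)$ Theorem being used to pin down the $p$-local structure of the two vertex-stabilisers. I would begin with the elementary reductions. We may assume $G_{uv}^{[1]}\neq1$, since otherwise there is nothing to prove; the Thompson--Wielandt theorem then shows that $G_{uv}^{[1]}$ is a $p$-group for some prime $p$, and, $G_v^{\Gamma(v)}$ being of HA type, \cite{Weiss} gives $r\in\{2,3\}$. Thus $G_{uv}^{[1]}=1$ already when $r\geq5$, which is the first assertion. For the final assertion, note that a vertex at distance at most $3$ from $u$ lies at distance at most $4$ from $v$, so $G_v^{[4]}\leq G_u^{[3]}\cap G_v^{[3]}=G_{uv}^{[3]}$; hence $G_v^{[4]}=1$ follows formally from the three displayed equalities (trivially so when $r\geq5$ or $r=3$). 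It therefore remains to treat $r\in\{2,3\}$ with $G_{uv}^{[1]}\neq1$ and to prove $G_{uv}^{[2]}=1$ when $r=3$ and $G_{uv}^{[3]}=1$ when $r=2$.

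Next I would assemble the amalgam-theoretic data. Connectedness of $\Gamma$ gives $\langle G_u,G_v\rangle=G$, and since $G$ is faithful and vertex-transitive on $\Gamma$, a nontrivial normal subgroup of $G$ cannot fix a vertex; in particular no nontrivial subgroup of $G_{uv}$ is normal in $G$. From local primitivity together with $G_{uv}^{[1]}\neq1$ and the Thompson--Wielandt theorem I would record, as part of the standard machinery around that theorem, that $O_p(G_x)\neq1$ for $x\in\{u,v\}$; this, with the structure of $G_{uv}^{[1]}$, is what puts $G_u$ and $G_v$ in a position to apply the Local $C(G,T)$ Theorem at the prime $p$. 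Here the affine hypothesis makes the action on $\Gamma(x)$ transparent, the image of $G_x$ on $\Gamma(x)$ being $V_x\rtimes L_x$ with $V_x=\soc(G_x^{\Gamma(x)})$ of order $r^\ell$ the translation subgroup and $L_x\leq\GL(V_x)$ irreducible. Finally fix $T\in\mathrm{Syl}_p(G_{uv})$; by edge-transitivity of a connected locally primitive graph, $T\in\mathrm{Syl}_p(G_u)\cap\mathrm{Syl}_p(G_v)$.

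The core of the proof is then to apply the Local $C(G,T)$ Theorem to $G_u$ and to $G_v$ in turn. For each $x\in\{u,v\}$ it yields a dichotomy: either some nontrivial characteristic subgroup of $T$ is normal in $G_x$, or the relevant quotient of $G_x$ modulo $O_p(G_x)$ lies in a short explicit list --- essentially $\SL_2(p^n)$, $\Sym(2p)$ and, for $p=2$, finitely many further small groups. Were the first alternative to hold at both $u$ and $v$, the resulting characteristic subgroup of $T$ would be normal in $\langle G_u,G_v\rangle=G$ while fixing the edge $\{u,v\}$, which is impossible; hence the second alternative holds for at least one of $x\in\{u,v\}$. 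Combining this with the structural restrictions on $V_x$ and $L_x$ already supplied by \cite[Theorem~$(i)$]{Weiss} (recall $r\in\{2,3\}$) forces the local group $G_x^{\Gamma(x)}$ into a very narrow range, so that --- using that a characteristic-$p$ amalgam is determined by its two members together with finitely much identification data --- one arrives at a complete description of the possible amalgams $(G_u,G_v)$: finitely many, together with at most finitely many infinite families (such as the $\SL_2(q)$-amalgams), each of uniformly bounded ``depth''. For each entry one then computes the filtration $G_{uv}^{[1]}\geq G_{uv}^{[2]}\geq\cdots$ directly, either by realising the amalgam inside a concrete automorphism group --- for $r=3$ the classical cubic examples, the Heawood graph and Tutte's $8$-cage, where $G_{uv}^{[1]}\neq1=G_{uv}^{[2]}$, and for $r=2$ the extremal valency-$4$ graphs, where $G_{uv}^{[2]}\neq1=G_{uv}^{[3]}$ --- or by a direct commutator computation pushing down along a critical path; the constants $2$ and $3$ in the statement are exactly the depths realised by these extremal examples.

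The main obstacle is this final classification and computation. Producing a short enough list is not itself difficult once the affine hypothesis is in force, but verifying, for every surviving amalgam, the precise vanishing of $G_{uv}^{[2]}$ when $r=3$ and of $G_{uv}^{[3]}$ when $r=2$ --- in particular ruling out any configuration admitting an arbitrarily deep stabiliser filtration, which is exactly the phenomenon that has obstructed earlier attacks on the HA case of Weiss's conjecture --- is where the real work lies, and is precisely what the Local $C(G,T)$ Theorem is brought in to make tractable.
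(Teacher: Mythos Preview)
Your outline shares the opening moves with the paper --- Thompson--Wielandt, then the Local $C(G,T)$ Theorem --- but diverges at the decisive step and leaves that step undone. Two concrete points.

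First, the setup. The paper does not apply the Local $C(G,T)$ Theorem to $G_v$ with an arbitrary $T\in\mathrm{Syl}_p(G_{uv})$; it applies it to $L_v:=\langle Q_vQ_t\mid t\in\Gamma(v)\rangle$ with $T:=Q_uQ_v$, where $Q_w=\mathbf O_p(G_w^{[1]})$. This $T$ is \emph{normal} in $G_{uv}$, so every nontrivial characteristic subgroup $C$ of $T$ is $G_{uv}$-invariant, and a one-line Hauptlemma argument gives $\mathbf N_{G_w}(C)=G_{uv}$ for each $w\in\{u,v\}$; hence $C(L_v,T)\leq G_{uv}\cap L_v<L_v$ cleanly. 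Your dichotomy ``either some nontrivial characteristic subgroup of $T$ is normal in $G_x$, or~\ldots'' is not the hypothesis of the theorem (which concerns the subgroup \emph{generated} by all such normalisers), and your $T$ need not be normal in $G_{uv}$, so the verification you sketch does not go through as written. The paper also shows $T\in\mathrm{Syl}_p(L_v)$ and derives $(p,r)\in\{(2,3),(3,2)\}$ internally --- from the observation that $L_v=NT$ with $N/Q_v$ an $r$-group is soluble, so $(\SL_2(q))'$ can occur only for $q\in\{2,3\}$ --- rather than citing \cite{Weiss}; in particular $r\neq p$, which you need but do not establish before asserting $T\in\mathrm{Syl}_p(G_v)$.

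Second, and more importantly, the paper does \emph{not} classify amalgams and then compute the filtration case by case. The Local $C(G,T)$ output is a family $E_1,\ldots,E_r$ of subnormal subgroups of $L_v$ with $E_i/\mathbf O_p(E_i)$ and the module $\mathbf O_p(E_i)$ of prescribed type. The paper uses this \emph{directly}: setting $Z_w:=\Omega_1(\mathbf Z(Q_w))$ and $V_i:=\Omega_1(\mathbf Z(\mathbf O_p(E_i)))$, a short chain of claims ($Z_u\leq Q_v$; $[V_i,E_i]\leq Z_v$; $[Z_u,E_i]\nleq V_i$; hence $\mathbf O_p(E_i)$ is nonabelian, forcing $p=3$; then produce $g\in E_i$ with $Z_u^g\nleq Q_u$ and apply the Hauptlemma to $G_{uv}^{[3]}$) yields $G_{uv}^{[2]}=1$ when $p\neq 3$ and $G_{uv}^{[3]}=1$ when $p=3$, uniformly and with no case division. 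Your final paragraph correctly locates ``where the real work lies'', but then proposes exactly the amalgam-by-amalgam computation that the paper's argument renders unnecessary; that unexecuted classification is the gap in your proposal.
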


Other recent and significant indications
towards a positive solution to the Weiss Conjecture  are given in~\cite{Giu1,Giu2,MSV,PoSV,PSV,PPSS}.

\subsection*{Acknowledgements}I am in debt (and I will always be) to Bernd Stellmacher. In fact, it was Bernd that pointed out to me the relevance of the Local $C(G,T)$ Theorem for a proof of Theorem~\ref{thm}. Actually, the ideas and the proof of Theorem~\ref{thm} are based on conversations and a manuscript of Bernd. I am also in debt to Luke Morgan for reading a preliminary version of this paper.  To both Bernd and Luke, my sincere thanks.

\section{Preliminaries}\label{preliminaries}

All graphs considered in this paper are simple (without multiple edges and without loops), undirected and connected. Given a subgroup $G$ of the automorphism group $\Aut(\Gamma)$ of $\Gamma$ and a vertex $v$ of $\Gamma$, we denote by $G_v$ the stabiliser of $v$ in $G$, that is, $G_v:=\{g\in G\mid v^g=v\}$. Moreover, we let $G_v^{\Gamma(v)}$ be the permutation group induced by the action of $G_v$ on the neighbourhood $\Gamma(v)$ of $v$. 

In what follows we let $\Gamma$ be a connected graph and $G$ be a subgroup of $\Aut(\Gamma)$ with $G_v^{\Gamma(v)}$ transitive for every vertex $v$ of $\Gamma$. We start by recalling some basic facts.

\begin{lemma}\label{lemma:21}
The group $G$ acts transitively on the edges of $\Gamma$.
\end{lemma}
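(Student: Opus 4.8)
The plan is to fix one edge of $\Gamma$ and show that its $G$-orbit is the whole edge set, using only the connectedness of $\Gamma$ together with the transitivity of $G_v$ on $\Gamma(v)$ for every vertex $v$. Fix an edge $\{u,v\}$ of $\Gamma$ and let $\mathcal{E}=\{u,v\}^G$ be the $G$-orbit of this edge. Call a vertex $w$ of $\Gamma$ \emph{saturated} if every edge of $\Gamma$ incident to $w$ lies in $\mathcal{E}$. It suffices to prove that every vertex of $\Gamma$ is saturated.

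First I would show that $u$ and $v$ are saturated. Let $w\in\Gamma(v)$. Since $u\in\Gamma(v)$ and $G_v$ acts transitively on $\Gamma(v)$, there is $g\in G_v$ with $u^g=w$; then $\{u,v\}^g=\{u^g,v^g\}=\{w,v\}$ (as $v^g=v$), so $\{w,v\}\in\mathcal{E}$. As $w$ was an arbitrary neighbour of $v$, the vertex $v$ is saturated, and the symmetric argument, using the transitivity of $G_u$ on $\Gamma(u)$, shows that $u$ is saturated as well.

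Next comes the key step: if $w$ is saturated and $x\in\Gamma(w)$, then $x$ is saturated. Since $w$ is saturated we have $\{w,x\}\in\mathcal{E}$, so there is $g\in G$ with $\{u,v\}^g=\{w,x\}$. Now either $u^g=w$ and $v^g=x$, or $v^g=w$ and $u^g=x$; in the first case put $y=v$ and in the second put $y=u$, so that in both cases $y\in\{u,v\}$ and $y^g=x$. By the previous paragraph $y$ is saturated, so every edge incident to $y$ lies in $\mathcal{E}$; applying $g$ and using that $\mathcal{E}$ is $G$-invariant, every edge incident to $y^g=x$ lies in $\mathcal{E}$, i.e.\ $x$ is saturated.

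Finally I would conclude by propagation: $v$ is saturated, and by the key step saturation passes from any saturated vertex to each of its neighbours, so induction on the distance from $v$ together with the connectedness of $\Gamma$ shows that every vertex of $\Gamma$ is saturated. Hence $\mathcal{E}$ contains every edge of $\Gamma$, which is exactly the statement that $G$ is transitive on the edges of $\Gamma$. The only point requiring a little care is the key step, where one must keep track of which endpoint of $\{u,v\}$ is carried onto $x$ by $g$; no genuine obstacle arises, and in particular note that $G$ need not be transitive on the vertices of $\Gamma$ for the conclusion to hold.
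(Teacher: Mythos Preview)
Your proof is correct and follows essentially the same idea as the paper's: both use only the connectedness of $\Gamma$ and the transitivity of each $G_v^{\Gamma(v)}$, and both proceed by induction on distance. The paper takes two arbitrary edges and inducts on the minimum distance between their endpoints, whereas you fix one edge and propagate the ``saturated'' property outward from its endpoints; these are two standard packagings of the same argument, and your closing remark that vertex-transitivity of $G$ is not needed applies equally to the paper's proof.
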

\begin{proof}
Given two vertices $v,v'$ of $\Gamma$, denote by $d(v,v')$ the length of a shortest path  from $v$ to $v'$.
Let $\{u,v\}$ and $\{w,t\}$ be two edges of $\Gamma$. We argue by induction on $$d:=\min\{d(u,w),d(u,t),d(v,w),d(v,t)\}. $$
Without loss of generality we may assume that $d=d(u,w)$. Let $u=u_0,u_1,\ldots,u_d=w$ be a path of length $d$ in $\Gamma$ from $u$ to $w$. If $d=0$, then $u=w$ and, as $G_u^{\Gamma(u)}$ is transitive, there exists $x\in G_u$ with $v^x=t$. Thus, $\{u,v\}^x=\{w,t\}$. Suppose that $d>0$. Since $G_w^{\Gamma(w)}$ is transitive, there exists $x\in G_w$ with $u_{d-1}^x=t$ and hence $\{w,u_{d-1}\}^x=\{w,t\}$. As $d(u,u_{d-1})=d-1$, by induction, there exists $g\in G$ with $\{u,v\}^g=\{w,u_{d-1}\}$ and hence $\{u,v\}^{gx}=\{w,t\}$.
\end{proof}

\begin{hauptlemma}
Let $\{u,v\}$ be an edge of $\Gamma$ and let $N\leq G_{u}\cap G_{v}$ with
$$
({\bf N}_{{G_{w}}}( {N}))^{\Gamma(w)}\quad\textrm{transitive for each }w\in
\{u,v\}.
$$
Then $N=1$.
\end{hauptlemma}
\begin{proof}
Write $K:={\bf N}_G (N)$. By hypothesis, $K_u^{\Gamma(u)}$ and $K_v^{\Gamma(v)}$ are both transitive. Thus, from Lemma~\ref{lemma:21}, $K$ acts transitively on the edges of $\Gamma$. As $N\unlhd K$ and $N$ fixes the arc $(u,v)$, we see that $N$ fixes the end points of every edge of $\Gamma$. 
\end{proof}

\begin{notation}\label{notation1}{\rm In what follows we let $\Gamma$ be a connected graph,  $G$ be a group of automorphisms of $\Gamma$ and $\{u,v\}$ be an edge of $\Gamma$. We assume that
\begin{enumerate}
\item $G$ is transitive on the vertices of $\Gamma$;
\item $G_v$ is finite;
\item $G_v^{\Gamma(v)}$ is primitive.
\end{enumerate}
We let $G_v^{[1]}$ denote the subgroup of $G_v$ fixing point-wise $\Gamma(v)$ and we let $G_{uv}^{[1]}=G_u^{[1]}\cap G_{v}^{[1]}$ denote the subgroup of the arc-stabiliser $G_{uv}$ fixing point-wise $\Gamma(u)\cup\Gamma(v)$.  }
\end{notation}

We now recall the Thompson-Wielandt Theorem with its formulation as  in~\cite{Van}, see also~\cite{Spiga1}. (Here, ${\bf F}^*(X)$ denotes the generalised Fitting subgroup.)

\begin{TW}The group $G_{uv}^{[1]}$ is a $p$-group for some prime $p$. Moreover, either $G_{uv}^{[1]}=1$ or  ${\bf F}^*(G_v)={\bf O}_p (G_v)$ and ${\bf F}^*(G_{uv})={\bf O}_p ({G_{uv}})$.
\end{TW}
\begin{notation}\label{notation2}{\rm Together with Hypothesis~\ref{notation1} we also assume that $G_{uv}^{[1]}\neq 1$ and we let $p$ be the prime number with ${\bf F}^* (G_v)={\bf O}_p ({G_v})$ and ${\bf F}^*(G_{uv})={\bf O}_p ({G_{uv}})$. Observe that $G_{uv}^{[1]}$ is a non-trivial $p$-group.

For simplicity, given a vertex $w$ of $\Gamma$, we write $$Q_w:={\bf O}_p ({{G_w^{[1]}}}),\quad L_w:=\langle Q_wQ_t\mid t\in \Gamma(w)\rangle,\quad T:=Q_uQ_v.$$}
\end{notation}

We immediately deduce the following lemma.
\begin{lemma}\label{basbas}We have
${\bf F}^* ({G_v^{[1]}})={\bf F}^* (G_v)=Q_v$.
\end{lemma}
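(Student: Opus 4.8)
The plan is to reduce the statement to the single inclusion ${\bf O}_p(G_v)\le G_v^{[1]}$ and then to read off everything from the self-centralising property of the generalised Fitting subgroup. By Hypothesis~\ref{notation2} we already have ${\bf F}^*(G_v)={\bf O}_p(G_v)$, so what is needed is the identification ${\bf O}_p(G_v)={\bf O}_p(G_v^{[1]})=Q_v$ and then ${\bf F}^*(G_v^{[1]})=Q_v$. One inclusion is immediate: $G_v^{[1]}$ is the kernel of the action of $G_v$ on $\Gamma(v)$, hence $G_v^{[1]}\unlhd G_v$, so $Q_v={\bf O}_p(G_v^{[1]})$ is characteristic in $G_v^{[1]}$ and therefore normal in $G_v$; being a $p$-group, $Q_v\le {\bf O}_p(G_v)$. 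Conversely, once ${\bf O}_p(G_v)\le G_v^{[1]}$ is known, ${\bf O}_p(G_v)$ is a normal $p$-subgroup of $G_v^{[1]}$, so ${\bf O}_p(G_v)\le {\bf O}_p(G_v^{[1]})=Q_v$, whence $Q_v={\bf O}_p(G_v)={\bf F}^*(G_v)$. (Note also $Q_v\ne 1$, since $G_{uv}^{[1]}$ is, by the Thompson--Wielandt theorem, a nontrivial normal $p$-subgroup of $G_v^{[1]}$.)

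For the remaining equality ${\bf F}^*(G_v^{[1]})=Q_v$, I would argue that $Q_v$ is self-centralising in $G_v^{[1]}$: since $Q_v={\bf F}^*(G_v)$ and $G_v^{[1]}\le G_v$, we get ${\bf C}_{G_v^{[1]}}(Q_v)\le {\bf C}_{G_v}(Q_v)={\bf C}_{G_v}({\bf F}^*(G_v))\le {\bf F}^*(G_v)=Q_v$. Consequently the Hall $p'$-subgroup of the nilpotent group ${\bf F}(G_v^{[1]})$, which centralises the $p$-part ${\bf O}_p(G_v^{[1]})=Q_v$, lies in $Q_v$ and is therefore trivial, so ${\bf F}(G_v^{[1]})=Q_v$; likewise ${\bf E}(G_v^{[1]})$ centralises ${\bf F}(G_v^{[1]})=Q_v$ and so lies in $Q_v$, forcing ${\bf E}(G_v^{[1]})=1$. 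Hence ${\bf F}^*(G_v^{[1]})={\bf F}(G_v^{[1]}){\bf E}(G_v^{[1]})=Q_v$. This part is routine.

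The real work, and the step I expect to be the main obstacle, is the inclusion ${\bf O}_p(G_v)\le G_v^{[1]}$, equivalently the triviality of the permutation group $({\bf O}_p(G_v))^{\Gamma(v)}$ induced on $\Gamma(v)$. This is a normal $p$-subgroup of the primitive group $G_v^{\Gamma(v)}$, so by primitivity it is either trivial -- in which case we are done -- or transitive on $\Gamma(v)$; in the latter case its centre is a nontrivial abelian transitive normal subgroup of $G_v^{\Gamma(v)}$, hence regular and self-centralising, so $({\bf O}_p(G_v))^{\Gamma(v)}$ is itself regular and equal to $\soc(G_v^{\Gamma(v)})$, and $G_v^{\Gamma(v)}$ has affine type. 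I would rule out this configuration by exploiting the rest of Hypothesis~\ref{notation2}, namely ${\bf F}^*(G_{uv})={\bf O}_p(G_{uv})$ and $G_{uv}^{[1]}\ne 1$, together with the $G$-invariance of the edge $\{u,v\}$: writing $W={\bf O}_p(G_v)$, in the bad case $W$ is transitive on $\Gamma(v)$, so $W\cap G_{uv}=W\cap G_v^{[1]}=Q_v$ with $W/Q_v\cong\soc(G_v^{\Gamma(v)})$ elementary abelian and centralised by $G_v^{[1]}$; feeding this and the symmetric data at $u$ into the common overgroup $G_{uv}$ should either produce a normal $p$-subgroup of $G_{uv}$ strictly larger than ${\bf O}_p(G_{uv})$, or contradict $G_{uv}^{[1]}\ne 1$ through a coprime action of $\soc(G_v^{\Gamma(v)})$ on $G_v^{[1]}/Q_v$. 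Pinning down this contradiction -- in particular controlling whether $G_v^{[1]}$ is a $p$-group and how $\soc(G_v^{\Gamma(v)})$ acts on $G_v^{[1]}/Q_v$ -- is the technical heart of the matter.
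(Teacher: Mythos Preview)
Your reduction is correct and matches the paper's: the only non-trivial point is ${\bf O}_p(G_v)\le G_v^{[1]}$, and once that is established everything else follows exactly as you say (the self-centralising argument for ${\bf F}^*(G_v^{[1]})=Q_v$ is fine). You also correctly identify that in the bad case $({\bf O}_p(G_v))^{\Gamma(v)}$ is the regular socle of an affine primitive group of characteristic $p$.

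Where your plan stalls is the mechanism for the contradiction; neither route you sketch works as stated. For the ``coprime action'' idea: writing $W={\bf O}_p(G_v)$, both $W$ and $G_v^{[1]}$ are normal in $G_v$, so $[W,G_v^{[1]}]\le W\cap G_v^{[1]}=Q_v$; thus $W$ already centralises $G_v^{[1]}/Q_v$ and no information comes from that action, coprime or otherwise. For the ``normal $p$-subgroup of $G_{uv}$ strictly larger than ${\bf O}_p(G_{uv})$'' idea, there is no candidate on offer, and in fact the correct direction is the opposite one.

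The paper shows that ${\bf O}_p(G_{uv})$ is \emph{small}. Set $V=({\bf O}_p(G_v))^{\Gamma(v)}$ and $H=(G_{uv})^{\Gamma(v)}$. Then $V$ is an irreducible $H$-module (the point stabiliser in an affine primitive group acts irreducibly on the socle), and a non-trivial ${\bf O}_p(H)$ would have non-trivial fixed points in $V$, producing a proper $H$-submodule. Hence ${\bf O}_p(H)=1$, so ${\bf O}_p(G_{uv})\le G_v^{[1]}$, which forces ${\bf O}_p(G_{uv})={\bf O}_p(G_v^{[1]})=Q_v$. The same argument at $u$ gives ${\bf O}_p(G_{uv})=Q_u$, so $Q_u=Q_v$ is normal in both $G_u$ and $G_v$; now the Hauptlemma (with $N=Q_v$) yields $Q_v=1$, contradicting $1\ne G_{uv}^{[1]}\le Q_v$. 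The single missing ingredient in your plan is this irreducibility observation about the point stabiliser in an affine primitive group.
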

\begin{proof}
Suppose that ${\bf O}_p( {G_v})\nleq G_v^{[1]}$. Since $G_v^{\Gamma(v)}$ is primitive, $({\bf O}_p ({G_v}))^{\Gamma(v)}$ is a transitive $p$-group and hence $G_v^{\Gamma(v)}$ is a primitive group containing an abelian regular subgroup and $({\bf O}_p ({G_v}))^{\Gamma(v)}$ is the socle of $G_v^{\Gamma(v)}$. Set $V:=({\bf O}_p ({G_v}))^{\Gamma(v)}$ and $H:=(G_{uv})^{\Gamma(v)}$. 

The elementary abelian $p$-group $V$ is an irreducible $H$-module via 
the action 
of $H$ on $V$ by conjugation. If ${\bf O}_p (H)\neq 1$, then 
${\bf C}_V\left( {\bf O}_p ( H) \right)$ is a non-trivial proper 
$H$-submodule of $V$, a contradiction. Thus ${\bf O}_p (H)=1$. Hence 
$({\bf O}_p ({G_{uv}}))^{\Gamma(v)}=1$. Therefore 
${\bf O}_p ({G_{uv}})\leq G_v^{[1]}$ and 
 ${\bf O}_p ({G_{uv}})={\bf O}_p ({G_v^{[1]}})$. This contradicts the 
Hauptlemma applied with $N:={\bf O}_p ({G_{v}^{[1]}})$. Therefore 
${\bf O}_p ({G_v})\leq G_v^{[1]}$ and hence 
${\bf O}_p ({G_v})={\bf O}_p ({G_v^{[1]}})=Q_v$.

The rest of the lemma follows from the Thompson-Wielandt Theorem.
\end{proof}

\begin{lemma}\label{basicL}The permutation group $L_v^{\Gamma(v)}$ is transitive and  $[G_v^{[1]},L_v]\leq Q_v$.
\end{lemma}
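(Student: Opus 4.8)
The plan is to establish the two assertions of the lemma separately. The containment $[G_v^{[1]},L_v]\le Q_v$ is elementary and self-contained, whereas the transitivity of $L_v^{\Gamma(v)}$ rests on a small application of the Hauptlemma.

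For the commutator statement, the key point is that $[G_v^{[1]},Q_t]\le Q_v$ for every $t\in\Gamma(v)$. Indeed, $G_v^{[1]}$ fixes $t$, so $G_v^{[1]}\le G_{vt}$; and $Q_t={\bf O}_p(G_t^{[1]})$ is characteristic in $G_t^{[1]}\unlhd G_t$ and contained in $G_{vt}$, so $Q_t\unlhd G_{vt}$. As $G_v^{[1]}$ is also normal in $G_{vt}$, we get $[G_v^{[1]},Q_t]\le G_v^{[1]}\cap Q_t\le G_v^{[1]}\cap G_t^{[1]}=G_{vt}^{[1]}$. But $G_{vt}^{[1]}=G_v^{[1]}\cap G_t^{[1]}$ is normal in $G_v^{[1]}$ (since $G_t^{[1]}\unlhd G_t\ge G_v^{[1]}$) and, by vertex-transitivity and the Thompson--Wielandt theorem, is a $p$-group; hence $G_{vt}^{[1]}\le{\bf O}_p(G_v^{[1]})=Q_v$, so $[G_v^{[1]},Q_t]\le Q_v$. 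Since $Q_v\unlhd G_v$, passing to $\overline{G_v}:=G_v/Q_v$ gives $[\overline{G_v^{[1]}},\overline{Q_t}]=1$ for every $t\in\Gamma(v)$; as $\overline{Q_v}=1$ we have $\overline{L_v}=\langle\overline{Q_t}\mid t\in\Gamma(v)\rangle$, so $\overline{G_v^{[1]}}$ centralises $\overline{L_v}$, i.e.\ $[G_v^{[1]},L_v]\le Q_v$.

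For the transitivity of $L_v^{\Gamma(v)}$: since $Q_v\le G_v^{[1]}$ acts trivially on $\Gamma(v)$, we have $L_v^{\Gamma(v)}=\langle Q_t^{\Gamma(v)}\mid t\in\Gamma(v)\rangle$; and since $Q_t^x=Q_{t^x}$ for $x\in G_v$ while $G_v$ is transitive on $\Gamma(v)$, the subgroups $Q_t^{\Gamma(v)}$ $(t\in\Gamma(v))$ form a single $G_v^{\Gamma(v)}$-orbit under conjugation, so $L_v^{\Gamma(v)}\unlhd G_v^{\Gamma(v)}$. As $G_v^{\Gamma(v)}$ is primitive, it now suffices to show $L_v^{\Gamma(v)}\ne1$, i.e.\ $Q_u\not\le G_v^{[1]}$. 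Assume $Q_u\le G_v^{[1]}$. Since $Q_u$ is characteristic in $G_u^{[1]}\unlhd G_u$ and $Q_u\le G_{uv}$, we have $Q_u\unlhd G_{uv}\ge G_v^{[1]}$, so $Q_u$ is a normal $p$-subgroup of $G_v^{[1]}$ and $Q_u\le{\bf O}_p(G_v^{[1]})=Q_v$. By vertex-transitivity and transitivity of $G_u^{\Gamma(u)}$, $\Gamma$ is $G$-arc-transitive, so there is $g\in G$ interchanging $u$ and $v$; conjugating $Q_u\le Q_v$ by $g$ gives $Q_v=Q_u^g\le Q_v^g=Q_u$, whence $N:=Q_u=Q_v$. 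Now $N\unlhd G_u$ and $N\unlhd G_v$, so ${\bf N}_{G_u}(N)^{\Gamma(u)}$ and ${\bf N}_{G_v}(N)^{\Gamma(v)}$ are both transitive and the Hauptlemma yields $N=1$. But $Q_v={\bf F}^*(G_v^{[1]})\ne1$ by Lemma~\ref{basbas}, since $G_v^{[1]}\ge G_{uv}^{[1]}\ne1$ --- a contradiction. Therefore $L_v^{\Gamma(v)}$ is a nontrivial normal subgroup of the primitive group $G_v^{\Gamma(v)}$, and hence transitive.

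The commutator containments and the identification of normal $p$-subgroups are routine bookkeeping with vertex-, arc- and edge-stabilisers. The delicate step is the nontriviality $L_v^{\Gamma(v)}\ne1$, equivalently $Q_u^{\Gamma(v)}\ne1$: one must arrange the Hauptlemma hypotheses correctly --- deriving $Q_u\le Q_v$, using arc-transitivity to produce the element interchanging $u$ and $v$ in order to force $Q_u=Q_v$, verifying normality in both $G_u$ and $G_v$, and invoking Lemma~\ref{basbas} to know $Q_v\ne1$. That is where I anticipate the main difficulty.
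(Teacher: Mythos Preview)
Your proof is correct and follows essentially the same route as the paper's: normality of $L_v$ in $G_v$ plus primitivity reduces transitivity to showing $Q_u\nleq G_v^{[1]}$, and the failure of this leads via $Q_u=Q_v$ and the Hauptlemma to $Q_v=1$, contradicting Lemma~\ref{basbas}; the commutator statement is obtained from $[G_v^{[1]},Q_t]\le Q_v$ for each $t\in\Gamma(v)$. The only cosmetic differences are that the paper gets $Q_u=Q_v$ from $|Q_u|=|Q_v|$ rather than an explicit swapping element, and bounds $[G_v^{[1]},Q_w]$ by observing directly that $G_v^{[1]}\cap Q_w$ is a normal $p$-subgroup of $G_v^{[1]}$ (since $Q_w$ itself is a $p$-group), without the detour through $G_{vt}^{[1]}$ and Thompson--Wielandt.
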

\begin{proof}
As $L_v\unlhd G_v$ and $G_v^{\Gamma(v)}$ is primitive, we have that either $L_v^{\Gamma(v)}$ is transitive or $L_v\leq G_v^{[1]}$. Suppose that $L_v\leq G_v^{[1]}$. Then $Q_u\leq G_v^{[1]}$. Thus $Q_u\unlhd G_v^{[1]}$ and $Q_u\leq \mathbf O_p( {{G_v^{[1]}}})=Q_v$. Therefore $Q_u=Q_v$. Now the Hauptlemma applied with $N:=Q_v$ gives $Q_v=1$. From Lemma \ref{basbas}, we get ${\bf F}^*(G_v)=1$, a contradiction.

As $Q_v\unlhd G_v^{[1]}$, we have $[G_v^{[1]},Q_v]\leq Q_v$. Now, let $w\in \Gamma(v)$. As $G_v^{[1]}$ normalises $Q_w$, we get $[G_v^{[1]},Q_w]\leq G_v^{[1]}\cap Q_w\leq \mathbf O_p ({{G_v^{[1]}}})=Q_v$. Now the second part of the  lemma immediately follows from the definition of $L_v$. 
\end{proof}

\begin{lemma}\label{lemma:43}Let $C$ be a non-identity characteristic subgroup of $T$. Then ${\bf N}_{{G_w}}(C)=G_{uv}$, for each $w\in \{u,v\}$.
\end{lemma}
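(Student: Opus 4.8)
\emph{Proof plan.} I would prove the two inclusions separately, the first being routine and the second carrying the content. Before starting, I would record the elementary facts used throughout. Since $Q_w={\bf O}_p(G_w^{[1]})$ is characteristic in $G_w^{[1]}\unlhd G_w$, we have $Q_w\unlhd G_w$; writing $w'$ for the other endpoint of $\{u,v\}$, the group $G_w^{[1]}$ fixes $\Gamma(w)\ni w'$ pointwise, so $G_w^{[1]}\leq G_{w'}$ and in particular $Q_w\leq G_{w'}$ and $Q_u,Q_v$ normalise each other; hence $T=Q_uQ_v=Q_vQ_u$ is a $p$-group with $Q_u,Q_v\unlhd T$, and $T\leq G_{uv}$ with $T\unlhd G_{uv}$. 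Moreover $G_w^{[1]}$ normalises $T$, since it normalises $Q_w$ (characteristic in $G_w^{[1]}$) and $Q_{w'}$ (as $G_w^{[1]}\leq G_{w'}$ and $Q_{w'}\unlhd G_{w'}$), and therefore it normalises every characteristic subgroup of $T$, in particular $C$. The inclusion ${\bf N}_{G_w}(C)\geq G_{uv}$ is then immediate: $G_{uv}\leq G_u\cap G_v$ normalises $Q_u$ and $Q_v$, hence $T$, hence its characteristic subgroup $C$, and $G_{uv}\leq G_w$.

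For the reverse inclusion I would argue by primitivity. Put $M:={\bf N}_{G_w}(C)$; by the above $G_w^{[1]}\leq M$ and $G_{uv}\leq M$. Now $G_{uv}^{\Gamma(w)}$ is the stabiliser of the neighbour $w'$ in the primitive group $G_w^{\Gamma(w)}$, so it is a maximal subgroup; as $M^{\Gamma(w)}$ contains it, either $M^{\Gamma(w)}=G_{uv}^{\Gamma(w)}$, in which case $M$ lies in the preimage $G_{uv}G_w^{[1]}=G_{uv}$ of $G_{uv}^{\Gamma(w)}$ and we are done, or $M^{\Gamma(w)}=G_w^{\Gamma(w)}$, in which case $M=G_w$ because $M$ contains the kernel $G_w^{[1]}$ of $G_w\to G_w^{\Gamma(w)}$; that is, $C\unlhd G_w$. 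So everything reduces to ruling out $C\unlhd G_w$, and by the symmetry of the two endpoints I may assume $w=v$.

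To exclude $C\unlhd G_v$, the key point is that vertex-transitivity together with the transitivity of $G_v$ on $\Gamma(v)$ produces an element $\sigma\in G$ interchanging $u$ and $v$: choose $g\in G$ with $u^g=v$, then $h\in G_v$ with $(v^g)^h=u$, and set $\sigma:=gh$. Conjugation by the graph automorphism $\sigma$ carries $G_u^{[1]}$ to $G_v^{[1]}$ and hence $Q_u$ to $Q_v$, and $Q_v$ to $Q_u$; thus $\sigma$ normalises $T=Q_uQ_v$ and therefore fixes its characteristic subgroup $C$. Consequently, if $C\unlhd G_v$ then $C=C^{\sigma}\unlhd G_v^{\sigma}=G_u$ as well. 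But then $C\leq T\leq G_u\cap G_v$ with ${\bf N}_{G_u}(C)=G_u$ and ${\bf N}_{G_v}(C)=G_v$, so $({\bf N}_{G_w}(C))^{\Gamma(w)}=G_w^{\Gamma(w)}$ is transitive for each $w\in\{u,v\}$, and the Hauptlemma applied with $N:=C$ forces $C=1$, contradicting $C\neq1$. This completes ${\bf N}_{G_w}(C)\leq G_{uv}$.

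I expect the only genuine obstacle to be the last step, disposing of the case $C\unlhd G_w$: one has to manufacture an automorphism of $\Gamma$ that mixes $u$ and $v$ yet stabilises $T$, in order to transport the normality of $C$ in $G_v$ to normality in $G_u$ and then close the loop with the Hauptlemma. The two inclusions, the fact that $T$ is a $p$-group normalised by $G_{uv}$, and the primitivity dichotomy are then all formal.
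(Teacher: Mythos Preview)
Your proof is correct and follows essentially the same route as the paper's: both obtain $G_{uv}\leq {\bf N}_{G_w}(C)$ from $T\unlhd G_{uv}$, use maximality of $G_{uv}$ in $G_w$ (primitivity) to get the dichotomy, and in the bad case produce an element swapping $u$ and $v$ that normalises $T$ (hence $C$), transport normality to the other vertex, and finish with the Hauptlemma. The only cosmetic difference is that the paper simply invokes arc-transitivity to obtain $t\in{\bf N}_G(G_{uv})$ with $(u,v)^t=(v,u)$, whereas you build the swapping element $\sigma$ by hand from vertex-transitivity plus local transitivity; these amount to the same thing.
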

\begin{proof}
Let $w\in \{u,v\}$ and write $N:={\bf N}_{{G}}({C})$. As $C\unlhd G_{uv}$, we have $G_{uv}\leq N_w$ and, by maximality, either $N_w=G_w$ or $N_w=G_{uv}$.

Assume that $N_w=G_w$. As $G$ acts transitively on the arcs of $\Gamma$, there exists $t\in {\bf N}_G({{G_{uv}}})$ with $(u,v)^t=(v,u)$. Hence $t$ normalises $Q_u Q_v=T$ and thus also $C$. This gives $N_u=G_u$ and $N_v=G_v$ and the Hauptlemma  yields $C=1$.
\end{proof}

\begin{lemma}\label{lemma:44}Suppose that $T\in {\rm Syl}_p({{L_v}})$. Then there exist subnormal subgroups $E_1,\ldots,E_r$ of $G_v$ such that for $E:=\langle E_1,\ldots,E_r\rangle$ the following hold:
\begin{description}
\item[(a)]$E\leq L_v$ and $G_v$ acts transitively on $\{E_1,\ldots,E_r\}$; in particular $E\unlhd G_v$.
\item[(b)]$[E_i,E_j]=1$, for $i,j\in \{1,\ldots,r\}$ with $i\neq j$.
\item[(c)]$G_v=EG_{uv}$.
\item[(d)]$\mathbf O_p( {{E_i}})=[\mathbf O_p({{E_i}}),E_i]=[Q_v,E_i]$ and $E_i={\bf O}^p({E_i})$, for $i\in \{1,\ldots,r\}$.
\item[(e)] $[E_i, \Omega_1({\bf Z}({{T}}))] \neq 1$, for $i\in \{1,\ldots,r\}$.
\item[(f)]For each $i\in \{1,\ldots,r\}$, one of the following holds:
\begin{description}
\item[(i)]$E_i/\mathbf O_p({{E_i}})\cong (\SL_2(q))'$ with $q=p^n$ for some $n\geq 1$, $\mathbf O_p( {{E_i}})=\Omega_1({\bf Z}({{\mathbf O_p({{E_i}})}}))$ and $\mathbf O_ p ({{E_i}})/{\bf Z}({{E_i}})$ is a natural $(\SL_2(q))'$-module for $E_i/\mathbf O_p( {{E_i}})$,
\item[(ii)]$p=3$, $E_i/\mathbf O_3( {{E_i}})\cong (\SL_2(q))'$ with $q=3^n$ for some $n\geq 1$, ${\bf Z}({{E_i}})=\Phi(\mathbf O_3( {{E_i}}))=(\mathbf O_3({{E_i}}))'$, $|{\bf Z}({{E_i}})|=q$, and $\mathbf O_3( {{E_i}})/\Omega_1({\bf Z}({{\mathbf O_3({{E_i}})}}))$ and $\Omega_1({\bf Z}({{\mathbf O_3({{E_i}}})}))/{\bf Z}({{E_i}})$ are both natural $(\SL_2(q))'$-modules for $E_i/\mathbf O_3( {{E_i}})$, 
\item[(iii)]$p=2$, $E_i/\mathbf O_2({{E_i}})\cong \Alt(2^n+1)$ for some $n\geq 2$, $\mathbf O_2({{E_i}})=\Omega_1({\bf Z}({{\mathbf O_2({{E_i}})}}))$, and $\mathbf O_2( {{E_i}})/{\bf Z}({{E_i}})$ is a natural $\Alt(2^n+1)$-module for $E_i/\mathbf O_2( {{E_i}})$.
\end{description}
\end{description}
\end{lemma}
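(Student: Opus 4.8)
The plan is to recognise Lemma~\ref{lemma:44} as a direct invocation of the \emph{Local $C(G,T)$ Theorem} of~\cite{CGT}, so the real work is setting up the hypotheses of that theorem inside the amalgam $G_u,G_{uv},G_v$ and then translating its output back into the language of $Q_w$ and $L_v$. First I would fix the vertex $v$ and consider the finite group $\bar G_v:=G_v/Q_v$. By Lemma~\ref{basbas} we have ${\bf F}^*(G_v)=Q_v$, so $\bar G_v$ has $p$-reduced structure in the sense required; moreover $T=Q_uQ_v$ is, by the standing hypothesis of this lemma, a Sylow $p$-subgroup of $L_v$, and one checks (using that $L_v^{\Gamma(v)}$ is transitive by Lemma~\ref{basicL} and that $Q_v\leq T$) that the image $\bar T$ of $T$ is a nontrivial $p$-subgroup of $\bar L_v:=L_vQ_v/Q_v\unlhd \bar G_v$ satisfying the characteristic-subgroup condition supplied by Lemma~\ref{lemma:43}: every nontrivial characteristic subgroup $C$ of $T$ has ${\bf N}_{G_v}(C)=G_{uv}<G_v$, which is exactly the failure of the $C(G,T)$-condition that drives the Local Theorem. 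The point is that $G_v$ is \emph{not} $p$-constrained-of-characteristic-$p$-type with $T$ in the kernel, and the Local $C(G,T)$ Theorem then describes the components — here, the subnormal subgroups $E_i$ — generated to make up for this.

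Next I would extract the list. The Local $C(G,T)$ Theorem hands back subnormal subgroups $E_1,\dots,E_r$ of $G_v$, transitively permuted by $G_v$ (giving (a) and the normality of $E=\langle E_1,\dots,E_r\rangle$), pairwise commuting modulo the relevant core (giving (b)), with $E_i$ perfect modulo $Q_v$ in the strong form $E_i={\bf O}^p(E_i)$ and with $[\mathbf O_p(E_i),E_i]=\mathbf O_p(E_i)=[Q_v,E_i]$ (giving (d)); the factorisation $G_v=EG_{uv}$ in (c) comes from the transitivity of $L_v^{\Gamma(v)}$ together with $G_v=L_vG_{uv}$ (since $G_{uv}^{\Gamma(v)}$ is the point stabiliser in the primitive group $G_v^{\Gamma(v)}$ and $L_v^{\Gamma(v)}$ is transitive) and the fact that $E$ covers $\bar L_v$ modulo $Q_v$. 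For (e) one uses that if $E_i$ centralised $\Omega_1({\bf Z}(T))$ then, since $T\in{\rm Syl}_p(L_v)$ and $E_i\unlhd\unlhd L_v$, a standard transfer/fusion argument would force $E_i$ into a normal $p$-complement-type situation contradicting $E_i={\bf O}^p(E_i)$ being insoluble; so $[E_i,\Omega_1({\bf Z}(T))]\neq1$. Finally, (f) is verbatim the classification of the possible $E_i/\mathbf O_p(E_i)$ together with their $p$-local module structure that constitutes the conclusion of the Local $C(G,T)$ Theorem — the $(\SL_2(q))'$ case with a natural module, the exceptional $p=3$ case with a two-step module, and the $p=2$ alternating-group case $\Alt(2^n+1)$ on its natural module.

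The main obstacle, and the step deserving the most care, is verifying that the configuration $(G_v,T)$ genuinely meets the precise hypotheses under which~\cite{CGT} applies — in particular that $T$ is a Sylow $p$-subgroup of the normal subgroup $L_v$ (this is the \emph{assumed} hypothesis of the lemma, so it may be taken for granted here, but it is what makes everything go), that ${\bf F}^*(G_v)$ is a $p$-group so the ambient group has the right generalised-Fitting structure, and that the obstruction to $C(G,T)$ is exactly controlled by the maximal subgroup $G_{uv}$ via Lemma~\ref{lemma:43}. A secondary technical point is bookkeeping between $G_v$ and $G_v/Q_v$: the $E_i$ must be lifted to genuine subnormal subgroups of $G_v$ (not merely of the quotient) with the stated relations holding on the nose, which is standard but requires the observation that subnormality and the operator ${\bf O}^p$ behave well under the extension by the $p$-group $Q_v$. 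Everything else is routine transcription from the statement of the Local $C(G,T)$ Theorem and from the structural facts already established in Lemmas~\ref{lemma:21}--\ref{lemma:43}.
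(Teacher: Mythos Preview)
Your overall plan --- recognise this as a direct application of the Local $C(G,T)$ Theorem once the hypotheses have been verified via Lemmas~\ref{basbas}, \ref{basicL} and \ref{lemma:43} --- is exactly the paper's approach. However, there are two execution errors and one genuine gap.

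First, the theorem is applied to $L_v$ itself, not to $G_v$ or to a quotient $G_v/Q_v$. One checks that $L_v$ is of characteristic $p$ (since $Q_v=\mathbf O_p(L_v)={\bf F}^*(L_v)$ by Lemma~\ref{basbas}) and that $T\in{\rm Syl}_p(L_v)$ by hypothesis, and Lemma~\ref{lemma:43} gives $\langle {\bf N}_{L_v}(C)\mid 1\neq C\ \textrm{char}\ T\rangle\leq G_{uv}\cap L_v<L_v$. So~\cite[Corollary~1.6]{CGT} applies to $L_v$ directly, and the $E_i$ come out as genuine subnormal subgroups of $L_v$ (hence of $G_v$, since $L_v\unlhd G_v$) --- no passage to a quotient and no lifting is needed. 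Your paragraph about ``bookkeeping between $G_v$ and $G_v/Q_v$'' is addressing a problem you created by working in the wrong group.

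Second, part~(e) is not proved by a transfer/fusion argument; it is simply part of the output of the theorem (it is built into~\cite[Definition~1.4(i)]{CGT}). Your sketch ``a standard transfer/fusion argument would force $E_i$ into a normal $p$-complement-type situation'' is vague and, as stated, does not work: centralising $\Omega_1({\bf Z}(T))$ does not by itself force a normal $p$-complement.

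The genuine gap is in part~(a). The Local $C(G,T)$ Theorem tells you that $G_v$ \emph{permutes} the set $\{E_1,\ldots,E_r\}$, but not that it does so \emph{transitively}. The paper handles this by an extra step you have omitted: using~(c), pick some $E_i$ with $E_i\nleq G_{uv}$, set $X:=\langle E_i^g\mid g\in G_v\rangle\unlhd G_v$; primitivity of $G_v^{\Gamma(v)}$ then forces $G_v=XG_{uv}$, and one replaces the original family by the single $G_v$-orbit $\{E_i^g\mid g\in G_v\}$. Without this replacement, the transitivity claim in~(a) is unjustified.
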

\begin{proof}
Recall that, according to~\cite[Definition~$1.2$]{CGT}, a finite group $X$ is said to be of characteristic $p$ if ${\bf C}_X\left({{\mathbf O_p (X)}}\right)\leq \mathbf O_p (X)$. 
By Lemma~\ref{basbas}, we have $Q_v={\bf F}^*(G_v )={\bf F}^* ({G_v^{[1]}})$ and, in particular, $Q_v=\mathbf O_p ({{L_v}})={\bf F}^*(L_v)$ and $L_v$ is of characteristic $p$. Moreover, by Lemma~\ref{lemma:43}, we have $$\langle {\bf N}_{{L_v}}({C})\mid C\neq 1,\,C\textrm{ characteristic in }T \rangle=G_{uv}\cap L_v<L_v$$ because $L_v^{\Gamma(v)}$ is transitive by Lemma~\ref{basicL}. Now, the group $L_v$ satisfies the hypothesis of~\cite[Corollary~$1.6$]{CGT} and thus the proof follows immediately from the Local $C(G,T)$ Theorem. 

Parts~(b) and~(c) follow from~\cite[Theorem~$1.5$~(b) and~(c)]{CGT} and the fact that $G_{uv}$ contains the subgroup $C(L_v,T)$ defined in~\cite{CGT}. 

From~\cite[Theorem~$1.5$~(a)]{CGT}, we have $\{E_1,\ldots,E_r\}^{G_v}=\{E_1,\ldots,E_r\}$ and hence $E\unlhd G_v$. Using Part~{\bf (c)} choose $i\in \{1,\ldots,r\}$ with $E_i\nleq G_{uv}$ and set $X=\langle E_i^g\mid g\in G_v\rangle$. Observe that $X\unlhd G_v$ and hence the primitivity of $G_v^{\Gamma(v)}$ yields $G_v=XG_{uv}$. In particular, replacing the family $\{E_1,\ldots, E_r\}$ and the group $E$ by the family $\{E_i^g\mid g\in G_v\}$ and the group $X$, we may assume that also Part~{\bf (a)} holds.

Part~(e) and the equalities $\mathbf O_p(  {{E_i}})=[\mathbf O_p( {{E_i})}, E_i]$ and $E_i{=\bf O}^p({E_i})$ in Part~(d) follow  from~\cite[Definition~$1.4$~(i)]{CGT}. Now, the equality $\mathbf O_p( {{E_i}})=[Q_v,E_i]$ in Part~(d) follows from $Q_v=\mathbf O_p( {L_v})$ and $E_i=\mathbf O^p(E_i)$. Finally, Part~(f) follows from~\cite[Definition~$1.4$]{CGT}.
\end{proof}
\begin{lemma}\label{lemma:45}
If $T\in{\rm Syl}_p({{L_v}})$, then $G_{uv}^{[2]}=1$ if $p\neq 3$ and $G_{uv}^{[3]}=1$ if $p=3$.
\end{lemma}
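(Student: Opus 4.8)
The plan is to use the detailed structure of the subnormal subgroups $E_1,\dots,E_r$ from Lemma~\ref{lemma:44} to bound the action of the point-wise stabilisers on small balls. The starting observation is that, by Lemma~\ref{lemma:44}(c), $G_v=EG_{uv}$, so $E$ is transitive on $\Gamma(v)$; moreover $G_{uv}^{[2]}$ (respectively $G_{uv}^{[3]}$) is normal in $G_{uv}$ and centralises $Q_v$ up to bounded depth, so I would first pin down how $G_{uv}^{[1]}$ sits inside $Q_v$ and how the $E_i$'s move the neighbours of $v$. The key point is that, since $G_v$ permutes $\{E_1,\dots,E_r\}$ transitively and $G_v=EG_{uv}$, the stabiliser $G_{uv}$ permutes the $E_i$'s with a single orbit missing (the ``diagonal'' index), and for the index $i$ with $E_i\nleq G_{uv}$ we have $E_i$ acting on $\Gamma(v)$ through one of the three configurations in Lemma~\ref{lemma:44}(f): $(\mathrm{SL}_2(q))'$ on a natural module, the $p=3$ exceptional module, or $\mathrm{Alt}(2^n+1)$ on its natural module.

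Next I would work inside a single $E_i$ together with the geometry of $\Gamma$. Write $Q:=Q_v$ and recall $\mathbf O_p(E_i)=[Q,E_i]$ is, modulo its centre, a natural module for $\bar E_i:=E_i/\mathbf O_p(E_i)$. The subgroup $G_{uv}^{[1]}\leq Q$ normalises each $E_i$, hence normalises $\mathbf O_p(E_i)$, and the plan is to show that $G_{uv}^{[1]}\cap \mathbf O_p(E_i)$ projects onto a ``large'' piece of the natural module; then a standard module-theoretic computation (using that a natural $(\mathrm{SL}_2(q))'$-module, or $\mathrm{Alt}(2^n+1)$-module, is generated by any two non-collinear one-spaces, respectively by the fixed points of two point-stabilisers) forces $\langle (G_{uv}^{[1]})^{E_i}\rangle$ to contain $\mathbf O_p(E_i)$. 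This is where the radius enters: elements of $G_{uv}^{[1]}$ fix $\Gamma(v)$ pointwise, so $G_{wv}^{[1]}\leq G_v^{[1]}$ for each neighbour $w$ of $v$, and pushing this one more step out (to vertices at distance $2$, resp. $3$) is exactly what one needs to trap a conjugate of $G_{uv}^{[1]}$ by an element of $E_i$ back inside $G_v^{[1]}$. The upshot I am aiming for is that $G_{uv}^{[2]}$ (resp. $G_{uv}^{[3]}$ when $p=3$, to absorb the extra module layer in case~(f)(ii)) is normalised by $E_i$ for every $i$, hence by $E=\langle E_1,\dots,E_r\rangle$, hence by $E G_{uv}=G_v$.

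Having arranged that $N:=G_{uv}^{[2]}$ (resp. $G_{uv}^{[3]}$) is normal in $G_v$, I would then play the same game at $u$. By edge-transitivity (Lemma~\ref{lemma:21}) the whole configuration is symmetric in $u$ and $v$, and the analogous argument gives $N\unlhd G_u$; alternatively one observes directly that $N$ being normal in $G_v$ and fixing the ball of radius $2$ (resp.\ $3$) about $v$ forces $(\mathbf N_{G_w}(N))^{\Gamma(w)}$ to be transitive for both $w\in\{u,v\}$, since $G_v\leq\mathbf N_G(N)$ gives transitivity at $v$, and the fixed-ball condition propagates normality to $u$. Either way, $N$ satisfies the hypothesis of the Hauptlemma, and we conclude $N=1$, i.e.\ $G_{uv}^{[2]}=1$ for $p\neq 3$ and $G_{uv}^{[3]}=1$ for $p=3$.

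The main obstacle I anticipate is the precise module-theoretic step: showing that $G_{uv}^{[1]}$, which a priori could be a small subgroup of $Q_v$, actually meets $\mathbf O_p(E_i)$ in something whose $E_i$-closure is all of $\mathbf O_p(E_i)$, and handling the three cases of Lemma~\ref{lemma:44}(f) uniformly enough that the radius bound comes out as stated (with the genuine loss of one unit in the radius in the $p=3$ exceptional case~(ii), where $\mathbf O_3(E_i)$ has two natural-module layers rather than one). Getting the correct normaliser-transitivity input into the Hauptlemma — in particular checking that after replacing $N=G_{uv}^{[1]}$ by the larger $N=G_{uv}^{[2]}$ or $G_{uv}^{[3]}$ the hypothesis $(\mathbf N_{G_w}(N))^{\Gamma(w)}$ transitive still holds at $u$ and not merely at $v$ — is the other delicate point, and is exactly where vertex- and edge-transitivity of $G$ must be combined with the normality just established.
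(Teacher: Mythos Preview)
Your plan diverges from the paper's argument at the crucial mechanism, and the step you single out as ``the main obstacle'' is indeed a genuine gap rather than a technicality. You aim to show that each $E_i$ \emph{normalises} $N:=G_{uv}^{[2]}$ (respectively $G_{uv}^{[3]}$), so that $N\unlhd EG_{uv}=G_v$ and the Hauptlemma applies. But for $g\in E_i$ with $E_i\nleq G_{uv}$ one has $(G_{uv}^{[2]})^g=G_{u^g v}^{[2]}$ with $u^g\neq u$, and nothing in the module description of $\mathbf O_p(E_i)$ forces these distinct point-wise stabilisers to coincide. Your proposed intermediate statement, that $\langle (G_{uv}^{[1]})^{E_i}\rangle\supseteq\mathbf O_p(E_i)$, concerns the subgroup \emph{generated} by the conjugates, not invariance of any fixed $G_{uv}^{[k]}$; and the input you want for it, that $G_{uv}^{[1]}\cap\mathbf O_p(E_i)$ is ``large'' in the natural module, is unsupported: a priori $G_{uv}^{[1]}$ could meet $\mathbf O_p(E_i)$ trivially. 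The heuristic ``one module layer costs one unit of radius'' is suggestive but is not turned into an argument.

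The paper avoids this normalisation problem entirely by working with the centres $Z_w:=\Omega_1({\bf Z}(Q_w))$ and producing \emph{centralisers} of $G_{uv}^{[k]}$ that act transitively. First, if $Z_u\nleq Q_v$ then $\langle Z_u^{G_v}\rangle$ is transitive on $\Gamma(v)$ and centralises $G_v^{[2]}$ (because $G_v^{[2]}\leq Q_u$), and symmetrically at $u$, so the Hauptlemma gives $G_{uv}^{[2]}=1$. Otherwise $Z_u\leq Q_v$, and now the $E_i$'s enter, but only to analyse $Z_u$: one shows $[V_i,E_i]\leq Z_v$ and $[Z_u,E_i]\nleq V_i$, which forces $\mathbf O_p(E_i)$ to be non-abelian, i.e.\ only case~(f)(ii) survives and $p=3$. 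A further step inside that case produces $g\in E_i$ with $Z_{u}^g\nleq Q_u$; setting $w:=u^g\in\Gamma(v)$, the group $X:=\langle Z_w^{G_u}\rangle$ is transitive on $\Gamma(u)$ and centralises $G_u^{[3]}$ (since $G_u^{[3]}\leq G_{vw}^{[1]}\leq Q_w$), and the Hauptlemma yields $G_{uv}^{[3]}=1$. Thus the transitive normaliser needed for the Hauptlemma is obtained as a centraliser built from conjugates of the $Z_w$'s, not by making $E_i$ itself normalise $G_{uv}^{[k]}$.
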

\begin{proof}
Assume the notation in Lemma~\ref{lemma:44}. Since $G_{uv}^{[1]}$ is a $p$-group, we see that $G_v^{[2]}$ is a $p$-group, and hence $G_v^{[2]}\leq Q_v$. If $G_{uv}^{[2]}=1$, then the lemma follows immediately and hence we assume that $G_{uv}^{[2]}\neq 1$.

For each $i\in \{1,\ldots,r\}$, we set 
$$V_i:=\Omega_1({\bf Z}({{\mathbf O_p ({{E_i}})}})).$$

For each vertex $w$ of $\Gamma$, we set $$Z_w:=\Omega_1({\bf Z}( {Q_w})).$$  

Now we subdivide the proof into six claims from which the proof will immediately follow.

\begin{claim}
\label{ location of zb and [zb,ei]}
$Z_u \leq Q_v$.
\end{claim}

\noindent Suppose that $Z_u\nleq Q_v$. 
Observe that $G_{v}^{[2]}=G_v^{[2]}\cap Q_v\leq G_u^{[1]}\cap Q_v\leq \mathbf O_p ({{G_u^{[1]}}})=Q_u$. 
Hence $Z_u$ centralises $G_v^{[2]}$. 
Thus also $H_v:=\langle Z_u^x\mid x\in G_v\rangle$ centralises $G_v^{[2]}$. 
Observe that, as $Z_u\nleq Q_v$, the group $H_v$ acts transitively on $\Gamma(v)$. 

By arc-transitivity we have $Z_v\nleq Q_u$, and hence a symmetric argument gives that $H_u:=\langle Z_v^x\mid x\in G_u\rangle$ centralises $G_u^{[2]}$ and acts transitively on $\Gamma(u)$. 

We conclude that $G_{uv}^{[2]}=G_u^{[2]}\cap G_v^{[2]}$ is centralised by $H_v$ and $H_u$. The Hauptlemma yields $G_{uv}^{[2]}=1$, a contradiction.
%
~$_\blacksquare$

\begin{claim}
\label{[vi,ei] in za}
$[V_i,E_i] \leq Z_v$ for every $i\in \{1,\ldots,r\}$.
\end{claim}

\noindent Let $i\in \{1,\ldots,r\}$. Observe that $[Z_v,E_i] \leq [Q_v, E_i] = \mathbf O_p({E_i})$ and hence $Z_v$ and $E_i$ normalise each other. Now $Z_v \cap E_i$ is an elementary abelian normal $p$-subgroup of $E_i$, whence $Z_v \cap E_i \leq  \mathbf O_p({E_i})$. Since $\mathbf O_p({E_i})=[Q_v,E_i]   \leq Q_v \cap E_i$ and $Z_v$ is central in $Q_v$, we have $Z_v \cap E_i \leq \Omega_1({\bf Z}({\mathbf O_p({E_i})})) = V_i$. This shows $$Z_v \cap E_i = Z_v \cap V_i.$$
Since $V_i / \mathbf Z(E_i)$ is a simple $(E_i/\mathbf O_p( {E_i}))$-module and $Z_v \cap V_i$ is $E_i$-invariant, we have either $Z_v \cap V_i \leq {\bf Z}({E_i})$ or $V_i = (Z_v\cap V_i){\bf Z}({E_i})$. In the latter case, we have 
$$[V_i,E_i] = [(Z_v\cap V_i ){\bf Z}( {E_i}), E_i] = [Z_v\cap V_i,E_i] \leq Z_v$$
and the claim follows. In the former case, since ${\bf O}^p(E_i)=E_i$, we see that
$$ [Z_v,E_i] = [Z_v,E_i,E_i] \leq [Z_v\cap E_i,E_i]=[Z_v\cap V_i, E_i] \leq [{\bf Z}({E_i}),E_i] = 1$$
and hence $E_i$ centralises $Z_v$. From Lemma~\ref{basbas}, we have $Q_v={\bf F}^* (G_v)$ and hence ${\bf F}^*(L_v)=Q_v$ and 
${\bf C}_{{L_v}}({{Q_v}})\leq Q_v$. 
As $Q_v\leq T$, we get 
${\bf C}_{{L_v}}({{T}})$
$\leq {\bf C}_{{L_v}}({{Q_v}})\leq Q_v$. 
From this it follows that  
$\Omega_1({\bf Z}({T})) \leq Z_v$, but this  contradicts Lemma~\ref{lemma:44}(e).~$_\blacksquare$

\begin{claim}
\label{ [zb, ei] notin vi}
$[Z_u,E_i] \nleqslant V_i$ for every $i\in \{1,\ldots,r\}$.
\end{claim}

\noindent Suppose that $[Z_u,E_i] \leq V_i$ for some $i\in \{1,\ldots,r\}$. Let $U := \langle Z_u^x\mid x\in E_i \rangle$. Then $U$ is normalised by $E_i$ and is a  subgroup of $Q_v$ by Claim~\ref{ location of zb and [zb,ei]}. We have
$$[U,E_i] = [\langle Z_u^x\mid x\in E_i\rangle ,E_i]  = \langle [Z_u,E_i]^x\mid x\in {E_i} \rangle \leq \langle V_i^x\mid x\in {E_i} \rangle = V_i.$$
As $E_i=\mathbf O^p(E_i)$,  using Claim~\ref{[vi,ei] in za}, we obtain
$$[U,E_i] = [U,E_i,E_i] \leq  [V_i,E_i] \leq Z_v.$$
 Thus $[Z_u,E_i] \leq [U,E_i] \leq Z_v$, which shows that $E_i$ normalises $Z_u Z_v$. By Lemma~\ref{lemma:44}~(a) and~(c), $Z_u Z_v$ is normalised by $\langle E_i, G_{uv}\rangle = G_v$.  It follows that $Z_u Z_v$ is normalised by
$\langle G_v, G_{\{u, v \}} \rangle = G,$ a contradiction.~$_\blacksquare$

\begin{claim}\label{ op(ei) non-abelian}$p=3$ and $\mathbf O_p(E_i)$  is non-abelian for every $i\in \{1,\ldots,r\}$. 
\end{claim}

\noindent By Claim~\ref{ location of zb and [zb,ei]}, we have $[Z_u,E_i]\leq [Q_v,E_i]\leq \mathbf O_p ({E_i})$ and by Claim~\ref{ [zb, ei] notin vi}, we have $[Z_u,E_i]\nleq V_i$. Thus $V_i \neq \mathbf O_p(E_i)$ and hence only case (ii) of Lemma~\ref{lemma:44}~(f) can hold. Thus $p=3$ and $(\mathbf O_p({E_i}))' \neq 1$.~$_\blacksquare$

\begin{claim}
\label{ zbg not qb} 
For every $i\in \{1,\ldots,r\}$, there exists $g \in E_i$ with $Z_u^g \nleq Q_u$.
\end{claim}

\noindent Let $i\in \{1,\ldots,r\}$. Suppose that $Z_u^g\leq Q_u$ for every $g\in E_i$ and set $U:= \langle Z_u^x\mid x\in {E_i} \rangle$. 

Since $Z_u$ centralises $Q_u$, we have $Z_u \leq {\bf Z}( U)$.  Now $U$ is $E_i$-invariant and moreover, since $Z_u$ is contained also in $Q_v$ by  Claim~\ref{ location of zb and [zb,ei]}, we get that $U$ is contained in $Q_v$. Therefore $[U,E_i] \leq [Q_v,E_i] =\mathbf O_p({E_i})$ (where in the last equality we used Lemma~\ref{lemma:44}~(d)). 

Note that $[Z_u,E_i] \leq [ {\bf Z}( U), E_i]$ and hence $[{\bf Z}( U),E_i] \nleq V_i$ by Claim~\ref{ [zb, ei] notin vi}. Since $\mathbf O_p(E_i)/ V_i$ is a simple $(E_i/\mathbf O_p( {E_i}))$-module by Lemma~\ref{lemma:44}~(f)~(ii), we have $[\mathbf Z(U),E_i] V_i = \mathbf O_p(E_i)$. Now $V_i \leq \mathbf Z(\mathbf O_p(E_i))$ and $[\mathbf Z(U),E_i] \leq \mathbf Z(U)$ since $E_i$ normalises $U$. In particular, $V_i$ and $[\mathbf Z(U),E_i]$ are both abelian and centralise each other. Thus $\mathbf O_p(E_i) = [\mathbf Z(U),E_i]V_i$ is abelian, a contradiction to Claim~\ref{ op(ei) non-abelian}.~$_\blacksquare$

\smallskip

Let $i\in \{1,\ldots,r\}$ and let $g\in E_i$  with $Z_u^g\nleq Q_u$. Now set $$w := u^g,\quad X:=\langle Z_{w}^x\mid x\in G_u\rangle.$$

\begin{claim}\label{alst}
 $X$  acts transitively on $\Gamma(u)$.
\end{claim}

\noindent Since $X\unlhd G_u$, it suffices to show that $X\nleq G_u^{[1]}$. Observe that $Z_w\leq Q_v$ by Claim~\ref{ location of zb and [zb,ei]}.
If $X\leq G_u^{[1]}$, then $Z_{w} \leq G_u^{[1]} \cap Q_v \leq Q_u$, a contradiction to our choice of $w$.~$_\blacksquare$

\smallskip

Recall that $G_{vu}^{[1]}$ is a $p$-group and hence so is $G_{vw}^{[1]}$. Since $G_{vw}^{[1]}$ is normalised by $G_w^{[1]}$, we get $G_{vw}^{[1]}\leq \mathbf O_p\left( {G_w^{[1]}}\right)=Q_w$. As $u$ and $w$ are both neighbours of $v$, we have $G_u^{[3]}\leq G_{vw}^{[1]}$. Therefore $G_u^{[3]}\leq Q_w$ and hence $Z_w$ centralises $G_u^{[3]}$. As $G_u^{[3]}$ is $G_u$-invariant,  $X$ centralises $G_u^{[3]}$ and hence also $G_{u}^{[3]}\cap G_{v}^{[3]}=G_{uv}^{[3]}$. The arc-transitivity and Claim~\ref{alst} give $({\bf N}_{{G_t}}({{G_{uv}^{[3]}}}))^{\Gamma(t)}$ is transitive, for $t\in \{u,v\}$. The Hauptlemma  gives $G_{uv}^{[3]}=1$.
%
\end{proof}

\section{Proof of Theorem~\ref{thm}}

\begin{proof}[Proof of Theorem~$\ref{thm}$]
Let $\Gamma$ be  a $G$-vertex-transitive graph and let $v$ be a vertex of $\Gamma$. Suppose that $G_v^{\Gamma(v)}$ is a primitive group containing an abelian regular subgroup and write $|\Gamma(v)|=r^\ell$ for some prime $r$ and some $\ell\geq 1$. 

Let $u$ be a neighbour of $v$. If $G_{uv}^{[1]}=1$, then  there is nothing to prove. Assume then that $G_{uv}^{[1]}\ne 1$. In particular, the hypotheses in Hypotheses~\ref{notation1} and~\ref{notation2} apply to $G$, $\Gamma$ and $\{u,v\}$. Now, we adopt the terminology  in Hypothesis~\ref{notation2}.

Let $N$ be a normal subgroup of $G_v$ minimal (with respect to set inclusion) subject to $Q_v\leq N\leq L_v$ and $N\nleq G_v^{[1]}$. Observe that $N$ is well-defined because $Q_v\leq L_v\unlhd G_v$ and $L_v\nleq G_v^{[1]}$ by Lemma~\ref{basicL}. As $G_v^{\Gamma(v)}$ is primitive containing an abelian regular subgroup, $N^{\Gamma(v)}$ is the socle of $G_v^{\Gamma(v)}$ and is the unique normal elementary abelian regular $r$-subgroup of $G_v^{\Gamma(v)}$.

Now, the definition of $L_v$ and the transitivity of $N^{\Gamma(v)}$ gives $L_v=Q_v\langle Q_w\mid w\in \Gamma(v)\rangle=Q_v\langle Q_u^n\mid n\in N\rangle\leq Q_vQ_uN=TN\leq L_v$. Thus $L_v=NT$.

If $r=p$, then $L_v^{\Gamma(v)}$ is a $p$-group  and the primitivity of $G_v^{\Gamma(v)}$ gives $L_v=N$. Therefore $T\leq G_v^{[1]}$ and hence $Q_v=Q_u$. Now, the Hauptlemma yields $Q_v=Q_u=1$, a contradiction. Thus $r\neq p$.

Now, $N^{\Gamma(v)}\cong N/(N\cap G_v^{[1]})$ is an $r$-group. Moreover, by Lemma~\ref{basicL}, we have $[N\cap G_v^{[1]},L_v]\leq Q_v$, that is, $L_v$ centralises $(N\cap G_v^{[1]})/Q_v$. This shows that $N/Q_v$ is nilpotent and hence the minimality of $N$ gives that $N/Q_v$ is an $r$-group. Therefore $T\in {\rm Syl}_p ({L_v})$. Now, the hypothesis of Lemma~\ref{lemma:44} are satisfied. We adopt the notation in Lemma~\ref{lemma:44}.

Since $N$ and $T$ are soluble, so is $L_v$. Therefore $(\SL_2(q))'$ can be a section of $L_v$ only when $q\in \{2,3\}$. We deduce $(p,r)\in \{(2,3),(3,2)\}$. In particular, $r\leq 3$ and $p\leq 3$. Now  the proof follows from Lemma~\ref{lemma:45}.
\end{proof}
\thebibliography{13}
\bibitem{CGT}D.~Bundy, N.~Hebbinghaus, B.~Stellmacher, The Local $C(G,T)$ Theorem, \textit{J. Algebra} \textbf{300} (2006), 741--789.

\bibitem{Giu1}M.~Giudici, L.~Morgan, A class of semiprimitive groups are that graph-restricted, \textit{Bull. Lond. Math. Soc. }\textbf{46}~(6), 1226--1236.

\bibitem{Giu2}M.~Giudici, L.~Morgan, On locally semiprimitive graphs and a theorem of Weiss, \textit{J. Algebra} \textbf{427} (2015), 104--107.

\bibitem{LPS}M.~W.~Liebeck, C.~E.~Praeger, J.~Saxl, On  the O'Nan-Scott theorem for finite primitive permutation groups, \textit{J. Australian Math. Soc. (A)} \textbf{44} (1988), 389--396

\bibitem{MSV}L.~Morgan, P.~Spiga, G.~Verret, On the order of Borel subgroups of group amalgams and an application to locally-transitive graphs, \textit{J. Algebra} \textbf{434} (2015), 138--152.

\bibitem{PoSV}P.~Poto\v{c}nik, P.~Spiga, G.~Verret, On graph-restrictive
  permutation groups, \textit{J. Comb. Theory
Ser. B} \textbf{102} (2012), 820--831.

\bibitem{PSV}C.~E.~Praeger, P.~Spiga, G.~Verret, Bounding the size of
  a vertex-stabiliser in a finite vertex-transitive graph, \textit{J. Comb. Theory Ser. B} \textbf{102} (2012), 797--819.

\bibitem{PPSS}C.~E.~Praeger, L.~Pyber, P.~Spiga, E.~Szab\'o, The Weiss
  conjecture for locally primitive graphs with automorphism groups
  admitting composition factors of bounded rank, \textit{Proc. Amer. Math. Soc. }\textbf{140} (2012), 2307--2318.

\bibitem{Spiga1} P.~Spiga, Two local conditions on the vertex stabiliser of arc-transitive graphs and their effect on the Sylow subgroups, \textit{J. Group Theory} \textbf{15} (2012), 23--35.

\bibitem{Spiga}P.~Spiga, On $G$-locally primitive graphs of locally Twisted Wreath type and a conjecture of Weiss, \textit{Journal of Combinatorial Theory A} \textbf{118} (2011), 2257--2260.

\bibitem{T0}V.~I.~Trofimov, Vertex stabilizers of graphs with projective suborbits. (Russian) \textit{Dokl. Akad. Nauk SSSR}
\textbf{315} (1990), 544--546; English transl., \textit{Soviet Math. Dokl.} \textbf{42} (1991), 825--828.

\bibitem{T1}V.~I.~Trofimov, Graphs with projective
  suborbits. Exceptional cases of characteristic $2$. I,
  \textit{Izv. Ross. 
Akad. Nauk Ser. Mat.} \textbf{62} (1998), 159--222; English transl.,
  \textit{Izv. Math.} \textbf{62} (1998), 1221--1279. 

\bibitem{T2}V.~I.~Trofimov, Graphs with projective
  suborbits. Exceptional cases of characteristic $2$. II,
  \textit{Izv. Ross. 
Akad. Nauk Ser. Mat.} \textbf{64} (2000), 175--196; English transl., \textit{Izv. Math.} \textbf{64} (2000), 173--192.

\bibitem{T3}V.~I.~Trofimov, Graphs with projective
  suborbits. Exceptional cases of characteristic $2$. III,
  \textit{Izv. Ross. 
Akad. Nauk Ser. Mat.} \textbf{65} (2001), 151--190; English transl., \textit{Izv. Math.} \textbf{65} (2001), 787--828.

\bibitem{T4}V.~I.~Trofimov, Graphs with projective
  suborbits. Exceptional cases of characteristic $2$. IV,
  \textit{Izv. Ross. 
Akad. Nauk Ser. Mat.} \textbf{67} (2003), 193--222; English transl., \textit{Izv. Math.} \textbf{67} (2003), 126--1294.

\bibitem{Van}J.~Van Bon, Thompson-Wielandt-like theorems revisited, \textit{Bull. London Math. Soc.} \textbf{35} (2003), 30--36.

\bibitem{Weisss}R.~Weiss, $s$-transitive graphs, \textit{Colloq. Math. Soc. J\'anos Bolyai} \textbf{25} (1978), 827--847.

\bibitem{Weiss}R.~Weiss, An application of $p$-factorization methods to symmetric graphs, \textit{Math. Proc. Comb.
Phil. Soc.} \textbf{85} (1979), 43--48.

\bibitem{Weiss0}R.~Weiss, Groups with a $(B, N )$-pair and locally transitive graphs, \textit{Nagoya Math. J.} \textbf{74} (1979), 1--21.

\bibitem{Weiss1}R.~Weiss, Permutation groups with projective unitary subconstituents, \textit{Proc. Amer. Math. Soc.} \textbf{78} (1980), 157--161.

\bibitem{Weiss2}R.~Weiss, Graphs which are locally Grassmann, \textit{Math. Ann.} \textbf{297} (1993), 325--334.

\end{document}